\newtheorem{theorem}{Theorem}[section]
\newtheorem{definition}[theorem]{Definition}
\newtheorem{lemma}[theorem]{Lemma}
\def\qedbox{\hbox{$\rlap{$\sqcap$}\sqcup$}}
\def\GL{\operatorname{GL}}
\begin{document}
\title[Geometric realizations]
 {Geometric realizations of generalized algebraic curvature operators}
\author[Gilkey, Nik\v cevi\'c, and Westerman]{P. Gilkey, S. Nik\v cevi\'c, and D. Westerman}
\begin{address}{PG and DW: Mathematics Department, University of Oregon,
Eugene Or 97403 USA.}
\end{address}
\email{gilkey@uoregon.edu and dwesterm@uoregon.edu}
\begin{address}
{SN: Mathematical Institute, Sanu,
Knez Mihailova 35, p.p. 367,
11001 Belgrade,
Serbia. {\it E-mail address:stanan@mi.sanu.ac.rs}}\end{address}

\begin{abstract} We study the 8 natural $\GL$ equivariant geometric realization questions for the space of generalized algebraic
curvature tensors. All but one of them is solvable; a non-zero projectively flat Ricci antisymmetric generalized algebraic
curvature is not geometrically realizable by a projectively flat Ricci antisymmetric torsion free connection.
\end{abstract}
\keywords{Generalized curvature operator, projective curvature tensor,\\ 
projectively flat, Ricci tensor, Ricci flat, Ricci symmetric, Ricci antisymmetric.\\
{\it MSC:} 53B20; {\it PACS:} 02.40.Hw}
\maketitle
\section{Introduction}

The Ricci tensor and the Weyl projective curvature operator have always been important in mathematical physics. 
They play a central role in our analysis. Projective structures are of particular
interest in both mathematics and in mathematical physics. Weyl \cite{W1922} used projective structures to attempt
a unification of gravitation and electro magnetics by constructing a model of space-time geometry combining both structures. His particular
approach failed for physical reasons but his model is still studied; see \cite{F70,H93,PS91,H82}. More recently, as the field is a vast
one, we refer to a few recent references \cite{BGV05,GK05,I04,KW06,MR05,MV87,NV05} to give a flavor of the context in which these concepts
appear.

Let $M$ be a smooth manifold of dimension $m$. We shall assume $m\ge3$ henceforth to avoid complicating the exposition unduly as the
$2$-dimensional setting is a bit different. Let $\nabla$ be a torsion free connection on the tangent bundle $TM$ and let
$$\mathcal{R}^\nabla(x,y):=\nabla_x\nabla_y-\nabla_y\nabla_x-\nabla_{[x,y]}$$
be the curvature operator; this $(3,1)$ tensor satisfies the identities:
\begin{eqnarray}
&&\mathcal{R}^\nabla(x,y)=-\mathcal{R}^\nabla(y,x),\label{eqn-1.a}\\
&&\mathcal{R}^\nabla(x,y)z+\mathcal{R}^\nabla(y,z)x+\mathcal{R}^\nabla(z,x)y=0\,.\label{eqn-1.b}
\end{eqnarray}
The relation of Equation (\ref{eqn-1.b}) is called the {\it first Bianchi identity}. If $P\in M$, we let 
$\mathcal{R}^\nabla_P\in\otimes^3T_P^*M\otimes T_PM=\otimes^2T_P^*M\otimes\operatorname{End}(T_PM)$ be
the restriction of $\mathcal{R}^\nabla$ to $T_PM$.
 
It is convenient to pass to a purely algebraic context. Let $V$ be a vector space of dimension $m$. A tensor
$\mathcal{A}\in \otimes^2V^*\otimes\text{End}(V)$ satisfying the symmetries given in Equations (\ref{eqn-1.a}) and (\ref{eqn-1.b})
is called a {\it generalized algebraic curvature operator} and we let $\frak{A}(V)\subset \otimes^2V^*\otimes\text{End}(V)$ be the
subspace of all such operators. The fundamental question that we shall be examining in this paper is the extent
to which algebraic properties of
$\mathcal{A}$ can be realized geometrically. Since we are working locally, we may assume without loss of generality that $M=V$.

One has the
following result; although this result is well known, we shall give the proof in Section
\ref{sect-2} for the sake of completeness as it is relatively short and as it contains a basic construction that is fundamental to our later
results.

\begin{theorem}\label{thm-1.1} Let $\mathcal{A}\in\frak{A}(V)$. There exists a torsion free connection $\nabla$ on $TV$ so that
$\mathcal{R}_0^\nabla=\mathcal{A}$. 
\end{theorem}

There are other geometric properties it is natural to study and which are invariant under the action of the general linear group $\GL(V)$,
i.e. which do not depend on the choice of a basis for $V$. In either the algebraic or the geometric setting, one defines the {\it Ricci
tensor}
$\rho(\mathcal{A})\in\otimes^2V^*$ by setting
$$\rho(\mathcal{A})(x,y):=\operatorname{Tr}\{z\rightarrow\mathcal{A}(z,x)y\}\,.$$
Decompose $\rho(\mathcal{A})=\rho_s(\mathcal{A})+\rho_a(\mathcal{A})$ where $\rho_s(\mathcal{A})\in
S^2(V^*)$ is a symmetric bilinear form and where
$\rho_a(\mathcal{A})\in\Lambda^2(V^*)$ is an antisymmetric bilinear form by setting
\begin{eqnarray*}
&&\rho_s(\mathcal{A})(x,y):=\textstyle\frac12\{\rho(\mathcal{A})(x,y)+\rho(\mathcal{A})(y,x)\},\\
&&\rho_a(\mathcal{A})(x,y):=\textstyle\frac12\{\rho(\mathcal{A})(x,y)-\rho(\mathcal{A})(y,x)\}\,.
\end{eqnarray*}
\begin{definition}\label{defn-1.2}
\rm Let $\mathcal{A}\in\mathfrak{A}(V)$.\begin{enumerate}
\item $\mathcal{A}$ is Ricci symmetric if and only if  $\rho(\mathcal{A})\in S^2(V^*)$ i.e. $\rho_a(\mathcal{A})=0$.
\item $\mathcal{A}$ is Ricci antisymmetric if and only if $\rho(\mathcal{A})\in\Lambda^2(V^*)$ i.e. $\rho_s(\mathcal{A})=0$.
\item $\mathcal{A}$ is Ricci flat if and only if $\rho(\mathcal{A})=0$.
\end{enumerate}\end{definition}

We say a connecction $\nabla$ is {\it Ricci symmetric} if the associated Ricci tensor is symmetric; such connections are
also often called {\it equiaffine connections}; they play a central role in many
settings -- see, for example, the discussion in \cite{BGNS06, BDS03, M03, MS99, PSS94}.
Although the following result is well known \cite{SS-62}, we present the
proof in Section
\ref{sect-3} since again the proof is short and the constructions involved play a crucial role in our development. If $(\mathcal{O},x)$ is
a system of local coordinates on $M$, let
$$\nabla_{\partial_{x_i}}\partial_{x_j}=\Gamma_{ij}{}^k\partial_{x_k}$$
define the {\it Christoffel symbols} of $\nabla$; we adopt the {\it Einstein convention} and sum over repeated indices. Set
$$\omega_{\mathcal{O}}:=\Gamma_{ij}{}^jdx^i\,.$$

\begin{theorem}\label{thm-1.3}
Let $\nabla$ be a torsion free connection. The following assertions are equivalent:
\begin{enumerate}
\item  One has that $d\omega_{\mathcal{O}}=0$ for
any system of local coordinates $\mathcal{O}$ on $M$.
\item One has that $\operatorname{Tr}(\mathcal{R}^\nabla)=0$, i.e. $\mathcal{R}^\nabla$ is trace free.
\item The connection $\nabla$ is Ricci symmetric.
\item The connection $\nabla$ locally admits a parallel volume form.
\end{enumerate}\end{theorem}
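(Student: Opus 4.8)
The plan is to route all four conditions through two explicit local objects: the coordinate-dependent $1$-form $\omega_{\mathcal{O}}$ and the tensorial $2$-form $\operatorname{Tr}(\mathcal{R}^\nabla)$ obtained by tracing $\mathcal{R}^\nabla$ on its $\operatorname{End}(V)$ factor. I would first prove the key identity $d\omega_{\mathcal{O}}=\operatorname{Tr}(\mathcal{R}^\nabla)$. Writing $\mathcal{R}^\nabla(\partial_i,\partial_j)\partial_k=R_{ijk}{}^l\partial_l$ (with $\partial_i:=\partial_{x_i}$) and substituting the torsion free expression $R_{ijk}{}^l=\partial_i\Gamma_{jk}{}^l-\partial_j\Gamma_{ik}{}^l+\Gamma_{jk}{}^n\Gamma_{in}{}^l-\Gamma_{ik}{}^n\Gamma_{jn}{}^l$, I would contract $l=k$. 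The crucial point is that the two quadratic Christoffel terms cancel after relabeling summation indices, leaving $\operatorname{Tr}(\mathcal{R}^\nabla)_{ij}=\partial_i\Gamma_{jk}{}^k-\partial_j\Gamma_{ik}{}^k$, which is exactly $(d\omega_{\mathcal{O}})_{ij}$. This establishes (1)$\Leftrightarrow$(2); it also shows that condition (1) is a posteriori coordinate independent, since $d\omega_{\mathcal{O}}$ coincides with the globally defined tensor $\operatorname{Tr}(\mathcal{R}^\nabla)$ in every chart, which justifies the phrase ``for any system of local coordinates.''

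For (2)$\Leftrightarrow$(3) I would contract the first Bianchi identity. From $R_{ijk}{}^l+R_{jki}{}^l+R_{kij}{}^l=0$ set $l=k$ and sum; using antisymmetry in the first two slots (Equation (\ref{eqn-1.a})) to rewrite the middle term as $R_{jki}{}^k=-R_{kji}{}^k$, and recalling $\rho(\mathcal{A})_{ij}=R_{aij}{}^a$, the identity collapses to $\operatorname{Tr}(\mathcal{R}^\nabla)_{ij}=\rho_{ji}-\rho_{ij}=-2\rho_a(\mathcal{A})(\partial_i,\partial_j)$. Hence $\operatorname{Tr}(\mathcal{R}^\nabla)=0$ if and only if $\rho_a=0$, i.e. if and only if $\nabla$ is Ricci symmetric.

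Finally, for (1)$\Leftrightarrow$(4), I would compute the covariant derivative of a local volume form $\Omega=f\,dx^1\wedge\cdots\wedge dx^m$ with $f$ nowhere zero. Since $\nabla_i dx^j=-\Gamma_{ik}{}^j\,dx^k$, every term with $k\ne j$ drops out of $\nabla_i\Omega$ by repetition of factors, giving $\nabla_i\Omega=(\partial_i\log|f|-\Gamma_{ij}{}^j)\,\Omega$. Thus $\Omega$ is parallel exactly when $d\log|f|=\omega_{\mathcal{O}}$, so a local parallel volume form exists if and only if $\omega_{\mathcal{O}}$ is locally exact. On a coordinate ball the Poincar\'e lemma identifies local exactness with $d\omega_{\mathcal{O}}=0$, closing the loop with (1). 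I expect the main obstacle to be essentially bookkeeping: getting the index cancellation in the first step correct and keeping the local-versus-global distinction honest, with the one genuinely nontrivial input being the Poincar\'e lemma that upgrades the closed form $\omega_{\mathcal{O}}$ to an exact one and thereby manufactures the parallel volume form.
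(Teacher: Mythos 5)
Your proposal is correct and follows essentially the same route as the paper: the quadratic Christoffel terms cancel under the trace to give $\operatorname{Tr}(\mathcal{R}^\nabla)=d\omega_{\mathcal{O}}$ (up to the harmless factor of $2$ in the paper's wedge-product bookkeeping), the contracted first Bianchi identity gives $\operatorname{Tr}\{\mathcal{R}(x,y)\}=\rho(y,x)-\rho(x,y)$, and the covariant derivative of $e^\Phi dx_1\wedge\cdots\wedge dx_m$ plus local exactness of closed $1$-forms handles the parallel volume form. No gaps.
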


We will establish the following geometric realizability result in Section \ref{sect-4}:

\begin{theorem}\label{thm-1.4}
Let $\mathcal{A}\in\mathfrak{A}(V)$. Then: \begin{enumerate}
\item  If $\mathcal{A}$ is Ricci symmetric, there exists a Ricci symmetric connection $\nabla$ on $TV$ so
$\mathcal{R}_0^\nabla=\mathcal{A}$.
\item If $\mathcal{A}$ is Ricci antisymmetric, there
exists a Ricci antisymmetric connection  $\nabla$  on $TV$ so
$\mathcal{R}_0^\nabla=\mathcal{A}$.
\item If $\mathcal{A}$ is Ricci flat,  there exists a Ricci flat connection  $\nabla$ on $TV$ so
$\mathcal{R}_0^\nabla=\mathcal{A}$.
\end{enumerate}\end{theorem}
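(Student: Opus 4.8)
The plan is to reuse the model connection constructed for Theorem \ref{thm-1.1}, whose Christoffel symbols $\Gamma_{ij}{}^k(x)$ are linear in $x$ and vanish at the origin, so that $\mathcal{R}_0^\nabla=\mathcal{A}$. Two features of this model drive everything. First, $\mathcal{R}_0^\nabla$ depends only on the $1$-jet of $\Gamma$ at $0$, so any correction vanishing to second order at $0$ leaves $\mathcal{A}$ untouched. Second, for \emph{every} torsion free connection, contracting the first Bianchi identity (\ref{eqn-1.b}) on the first and output slots gives $\Tr(\mathcal{R}^\nabla)=-2\rho_a(\mathcal{R}^\nabla)$, while a direct computation (which also underlies Theorem \ref{thm-1.3}) shows that the quadratic Christoffel terms in $\Tr(\mathcal{R}^\nabla)$ cancel, so that $\Tr(\mathcal{R}^\nabla)=d\omega_{\mathcal{O}}$ identically. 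Hence $\rho_a(\mathcal{R}^\nabla)=-\tfrac12 d\omega_{\mathcal{O}}$ for any such connection; for the linear model $\omega_{\mathcal{O}}$ is linear in $x$, so $d\omega_{\mathcal{O}}$ is constant and equals its value $\Tr(\mathcal{A})=-2\rho_a(\mathcal{A})$ at the origin.

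For assertion (1), $\mathcal{A}$ Ricci symmetric gives $\rho_a(\mathcal{A})=0$, whence $d\omega_{\mathcal{O}}=0$ everywhere, and Theorem \ref{thm-1.3} shows $\nabla$ is Ricci symmetric. Since Ricci symmetry imposes no condition whatsoever on $\rho_s$, the model connection already does the job, and (1) is immediate.

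For (2) and (3) the antisymmetric part is again settled automatically: $\rho_a(\mathcal{R}^\nabla)=\rho_a(\mathcal{A})$ is constant, which is exactly what (2) permits and is $0$ in case (3). The one remaining quantity is the symmetric Ricci. For the linear model one has $\rho_s(\mathcal{R}^\nabla)=\rho_s(\mathcal{A})+Q$, where $Q$ is the homogeneous quadratic built from the products of Christoffel symbols; since $\rho_s(\mathcal{A})=0$ in both cases, the task is to kill $Q$. I would correct the connection by a torsion free tensor field $C_{ij}{}^k$ that vanishes to second order at $0$ (preserving $\mathcal{R}_0^\nabla=\mathcal{A}$) and is trace free, $C_{ij}{}^j=0$ (preserving $\omega_{\mathcal{O}}$, hence $\rho_a$), chosen so that the symmetric Ricci of $\nabla+C$ vanishes identically.

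The main obstacle is precisely this last cancellation. Unlike $\rho_a$, which is controlled by the single closed form $d\omega_{\mathcal{O}}$, the symmetric Ricci admits no such potential, and the naive scheme --- pick a cubic $C$ so that $\partial C$ cancels the quadratic $Q$ --- fails to terminate: the nonlinear terms $\Gamma\ast C$ and $C\ast C$ feed higher degree contributions back into $\rho_s$, so an order-by-order solution is only formal. The real content of (2) and (3) is therefore to produce a correction for which $\rho_s$ vanishes on the nose, either by exhibiting a closed form $C$ whose higher-order contributions cancel or by solving the prescribed-symmetric-Ricci problem for a connection exactly. Once $\rho_s\equiv 0$ is achieved with $\rho_a$ as above, assertions (2) and (3) follow at once.
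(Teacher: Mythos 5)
Your treatment of assertion (1) is correct and is in fact a shortcut the paper does not take: for the linear model of Lemma \ref{lem-2.1} the quadratic Christoffel terms cancel in $\Tr(\mathcal{R}^\nabla)$, the form $\omega_{\mathcal{O}}$ is linear with $d\omega_{\mathcal{O}}$ determined by $\rho_a(\mathcal{A})$, and Theorem \ref{thm-1.3} then gives Ricci symmetry of the model connection whenever $\rho_a(\mathcal{A})=0$. The same observation correctly reduces (2) and (3) to forcing $\rho_s(\mathcal{R}^\nabla)\equiv0$ by a trace free correction vanishing to second order at the origin.

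For (2) and (3), however, there is a genuine gap: you identify the problem --- kill the quadratic error $Q$ in $\rho_s$ without reintroducing lower-order errors --- and then stop, explicitly deferring ``the real content'' to an unspecified exact construction. Moreover, the one concrete strategy you mention, the order-by-order correction, is dismissed as ``only formal,'' whereas it is precisely the paper's proof: the scheme does not terminate, but it \emph{converges}. The paper works in Banach spaces $\mathcal{H}(\delta,\nu,\cdot)$ of holomorphic germs with weighted sup norms, produces in Lemma \ref{lem-4.3} a bounded right inverse $\Theta\mapsto\Gamma$ of $\rho\circ\mathcal{L}$ satisfying $\Gamma_{ij}{}^j=0$ and $||\Gamma||_{\delta,\nu+1}\le||\Theta||_{\delta,\nu}$ (built from an indefinite integral in a third coordinate direction $k_{ij}\notin\{i,j\}$ --- this is where $m\ge3$ enters), and then shows in Lemma \ref{lem-4.2} that the successive corrections $\Gamma_\nu$ satisfy $||\Gamma_\nu||_{\delta,2\nu-1}\le C^{2\nu-1}$ while the residual symmetric Ricci error is $O(C^{2\nu}|z|^{2\nu})$; a geometric series estimate on a small ball then yields a convergent sum whose limit is a real analytic connection with $\rho(\mathcal{R}^\nabla_P)$ exactly constant (Theorem \ref{thm-4.1}). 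Without some such convergence argument (or an alternative exact construction, which you do not supply), assertions (2) and (3) remain unproved in your write-up.
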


The Ricci tensor $\rho$ defines a natural short exact sequence
which is equivariant with respect to the natural action of $\GL(V)$:
\begin{equation}\label{eqn-1.c}
0\rightarrow\ker(\rho)\rightarrow\frak{A}(V)\rightarrow V^*\otimes V^*\rightarrow0\,.
\end{equation}
If $\Theta\in V^*\otimes V^*$, set
\begin{equation}\label{eqn-1.d}
H(\Theta)(x,y)z:=\Theta(x,y)z-\Theta(y,x)z+\Theta(x,z)y-\Theta(y,z)x\,.
\end{equation}
Clearly $H(\Theta)(x,y)=-H(\Theta)(y,x)$. One verifies that the Bianchi identity is satisfied and thus
$H(\Theta)\in\mathfrak{A}(V)$ by computing:
\begin{eqnarray*}
&&H(\Theta)(x,y)z+H(\Theta)(y,z)x+H(\Theta)(z,x)y\\
&=&\Theta(x,y)z-\Theta(y,x)z+\Theta(x,z)y-\Theta(y,z)x\\
&+&\Theta(y,z)x-\Theta(z,y)x+\Theta(y,x)z-\Theta(z,x)y\\
&+&\Theta(z,x)y-\Theta(x,z)y+\Theta(z,y)x-\Theta(x,y)z\\
&=&0\,.
\end{eqnarray*}

%The splitting $H$ is central in many contexts. An analogous operator is used in \cite{DG04,F03} to study the space of
%Riemannian algebraic tensors.

 Let $\{e_i\}$ be a basis for $V$. Let $\{e^i\}$ be the corresponding dual basis for $V^*$. Then:
\begin{eqnarray*}
&&\rho(H(\Theta))(y,z)\\
&=&e^i\{\Theta(e_i,y)z-\Theta(y,e_i)z+\Theta(e_i,z)y-\Theta(y,z)e_i\}\\
&=&\Theta(z,y)-\Theta(y,z)+\Theta(y,z)-m\Theta(y,z)\\
&=&\textstyle\frac{1-m}2\{\Theta(z,y)+\Theta(y,z)\}+\frac{1+m}2\{\Theta(z,y)-\Theta(y,z)\}\\
&=&(1-m)\Theta_s(y,z)-(1+m)\Theta_a(y,z)\,.
\end{eqnarray*}
So modulo a suitable renormalization, $H$ splits the short exact sequence of Equation (\ref{eqn-1.c}).
Let $\frak{W}(V):=\ker(\rho)\subset\frak{A}(V)$ be the space of {\it Weyl projective curvature operators.} 
Let $\mathcal{P}(\mathcal{A})$ be the projection of $\mathcal{A}$ on $\frak{W}(V)$;
\begin{equation}\label{eqn-x}
\mathcal{P}(\mathcal{A})=\mathcal{A}+\textstyle\frac1{m-1}H(\rho_s(\mathcal{A}))+\frac1{1+m}H(\rho_a(\mathcal{A}))\,.
\end{equation}
Following \cite{SSV91} one says that
$\mathcal{A}\in\frak{A}(V)$ is {\it projectively flat} if $\mathcal{P}(\mathcal{A})=0$ or, equivalently, if there exists
$\Theta\in V^*\otimes V^*$ so
$\mathcal{A}=H(\Theta)$. One says that a $\nabla$ is projectively flat the associated curvature operator $\mathcal{R}^\nabla_P$ is
projectively flat for all points
$P$ of
$M$. 
Note that:
$$\begin{array}{ll}
  \dim\{S^2(V^*)\}=\textstyle\frac12m(m+1),&
  \dim\{\mathfrak{W}(V)\}=\textstyle\frac{m^2(m^2-4)}{3},\\
\dim\{\Lambda^2(V^*)\}=\textstyle\frac12m(m-1)\,.\vphantom{\vrule height 11pt}
\end{array}$$
One has the following result of Bokan \cite{B90} and Strichartz \cite{S88}; see also related work of Singer and Thorpe \cite{ST}
in the Riemannian setting:

\begin{theorem}\label{thm-1.5}
There is a $\GL(V)$ equivariant decomposition of $\frak{A}(V)$ into irreducible $\GL(V)$ modules
$\frak{A}(V)=\frak{W}(V)\oplus S^2(V^*)\oplus\Lambda^2(V^*)$.
\end{theorem}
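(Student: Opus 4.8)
The plan is to obtain the decomposition by splitting the $\GL(V)$ equivariant short exact sequence of Equation (\ref{eqn-1.c}) and then to verify that each of the three summands is $\GL(V)$ irreducible. First I would record that the Ricci contraction $\rho\colon\frak{A}(V)\to V^*\otimes V^*$ is $\GL(V)$ equivariant and, by the computation of $\rho(H(\Theta))$ carried out above, surjective; indeed $\rho\circ H$ acts as the nonzero scalar $1-m$ on $S^2(V^*)$ and as $-(1+m)$ on $\Lambda^2(V^*)$, where the hypothesis $m\ge3$ guarantees nonvanishing. Hence $H$ restricts to $\GL(V)$ equivariant injections of $S^2(V^*)$ and of $\Lambda^2(V^*)$ into $\frak{A}(V)$ whose images are complementary to $\frak{W}(V)=\ker(\rho)$, with $\mathcal{P}$ of Equation (\ref{eqn-x}) as the associated projection. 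Since the splitting $V^*\otimes V^*=S^2(V^*)\oplus\Lambda^2(V^*)$ is itself $\GL(V)$ equivariant, this yields the internal direct sum $\frak{A}(V)=\frak{W}(V)\oplus S^2(V^*)\oplus\Lambda^2(V^*)$ as $\GL(V)$ modules.

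It then remains to check irreducibility of the three summands. That $S^2(V^*)$ and $\Lambda^2(V^*)$ are irreducible $\GL(V)$ modules is classical, as they are the Schur modules attached to the partitions $(2)$ and $(1,1)$ applied to $V^*$. For $\frak{W}(V)$ I would first identify $\frak{A}(V)$ itself as a concrete Schur module. The three covariant arguments of $\mathcal{A}$ carry the symmetry ``antisymmetric in the first two slots'' together with the first Bianchi identity; since for a tensor already antisymmetric in its first two indices the Bianchi cyclic sum equals three times its total antisymmetrization, imposing the Bianchi identity is precisely the vanishing of the $\Lambda^3(V^*)$ component of $\Lambda^2(V^*)\otimes V^*$. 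Thus the covariant part of $\frak{A}(V)$ is the irreducible Schur module $\Sigma^{(2,1)}(V^*)$ of type $(2,1)$, and $\frak{A}(V)\cong\Sigma^{(2,1)}(V^*)\otimes V$.

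The crux is then to decompose this mixed tensor product. Because the standard module $V$ is minuscule for $\GL(V)$, the Pieri type rule applies at the level of highest weights: the irreducible constituents of $\Sigma^{(2,1)}(V^*)\otimes V$ are obtained from the highest weight $(0,\dots,0,-1,-2)$ of $\Sigma^{(2,1)}(V^*)$ by adding $1$ to exactly one coordinate in each way that keeps the entries nonincreasing, each arising with multiplicity one. For $m\ge3$ precisely three such additions are legal, at the first, the $(m-1)$st, and the $m$th coordinate, producing the distinct highest weights $(1,0,\dots,0,-1,-2)$, $(0,\dots,0,-2)$, and $(0,\dots,0,-1,-1)$; the last two are the highest weights of $S^2(V^*)$ and $\Lambda^2(V^*)$. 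Matching this multiplicity free list against the submodule decomposition of the first paragraph forces $\frak{W}(V)$ to be the single remaining constituent, with highest weight $(1,0,\dots,0,-1,-2)$, and hence irreducible. The main obstacle is exactly this last computation: one must set up the rational (mixed $V$ and $V^*$) representation theory carefully, verify that exactly three coordinate increments remain nonincreasing, and confirm that the resulting weights are distinct when $m\ge3$; this is where the hypothesis $m\ge3$ is essential, consistent with $\dim\frak{W}(V)=\frac{m^2(m^2-4)}{3}$ vanishing at $m=2$, and the dimension figures recorded before the theorem furnish a useful independent check.
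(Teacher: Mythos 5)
Your proof is correct, but it does not follow the paper, because the paper does not actually prove Theorem \ref{thm-1.5}: it cites Bokan \cite{B90} and Strichartz \cite{S88} for the result, and only establishes the direct-sum part of the statement via the computation $\rho(H(\Theta))=(1-m)\Theta_s-(1+m)\Theta_a$ and the projection formula of Equation (\ref{eqn-x}) --- which is exactly the splitting of the sequence (\ref{eqn-1.c}) that you reuse in your first paragraph. What you supply beyond the paper is the irreducibility of the three summands, and your route is sound: the observation that the Bianchi cyclic sum of a tensor alternating in its first two slots equals three times its full alternation correctly identifies the covariant part of $\frak{A}(V)$ as the Schur module $\Sigma^{(2,1)}(V^*)$, the identification $\frak{A}(V)\cong\Sigma^{(2,1)}(V^*)\otimes V$ is legitimate because the Bianchi identity does not involve the contravariant slot, and the minuscule Pieri rule does yield a multiplicity-free three-term decomposition whose two ``small'' constituents are $S^2(V^*)$ and $\Lambda^2(V^*)$; the dimension check $\frac{m(m^2-1)}{3}\cdot m=\frac{m^2(m^2-4)}{3}+\frac{m(m+1)}{2}+\frac{m(m-1)}{2}$ confirms the bookkeeping. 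Two points deserve an explicit sentence in a polished write-up: the highest-weight analysis takes place over $\mathbb{C}$ while $V$ is real, so you should note that the constituents are absolutely irreducible (irreducibility of the complexification implies irreducibility of the real module); and the step ``matching the list forces $\frak{W}(V)$ to be the remaining constituent'' invokes complete reducibility of these algebraic $\GL(V)$-modules, which is standard but should be named. Compared with the classical treatments in \cite{B90,S88}, which proceed by exhibiting explicit equivariant projections and invariant-theoretic computations, your argument is shorter and conceptually transparent but leans on more representation-theoretic machinery (Schur modules for mixed rational representations and the Pieri rule); the explicit approach, like the paper's splitting map $H$, has the advantage of producing the concrete projection formulas that the rest of the paper actually uses.
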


We shall omit the proof of the following result as it plays no role in our analysis and is only included for the sake of
completeness; see \cite{SSV91} for further details:

\begin{theorem}\label{thm-1.6}
 Let $\nabla$ and $\bar\nabla$ be torsion free connections. The following conditions
are equivalent and define the notion of {\rm projective equivalence}:
\begin{enumerate}\item $\mathcal{P}(\mathcal{R}^\nabla)=\mathcal{P}(\mathcal{R}^{\bar\nabla})$.
\item There is a $1$-form $\theta$ so $\nabla_xy-\bar\nabla_xy=\theta(x)y+\theta(y)x$.
\item The unparametrized geodesics of $\nabla$ and of $\bar\nabla$ coincide.
\end{enumerate}\end{theorem}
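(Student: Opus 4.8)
The plan is to treat $(2)$ and $(3)$ as the substantive defining conditions, establish their equivalence directly, and then connect both to the curvature condition $(1)$. Throughout I set $S(x,y):=\nabla_xy-\bar\nabla_xy$ for the difference tensor. Because both connections are torsion free, $S(x,y)-S(y,x)=[x,y]-[x,y]=0$, so $S$ is a symmetric element of $S^2(V^*)\otimes V$; the entire problem then becomes one of deciding which symmetric $S$ satisfy each condition.

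For $(2)\Leftrightarrow(3)$ I would argue through the geodesic equation. A curve $\gamma$ is an unparametrized geodesic of $\nabla$ precisely when $\nabla_{\dot\gamma}\dot\gamma$ is a multiple of $\dot\gamma$, and likewise for $\bar\nabla$; since $\nabla_{\dot\gamma}\dot\gamma-\bar\nabla_{\dot\gamma}\dot\gamma=S(\dot\gamma,\dot\gamma)$ and geodesics issue from every point in every direction, the two unparametrized geodesic families coincide if and only if $S(v,v)\in\operatorname{span}\{v\}$ for every tangent vector $v$. The direction $(2)\Rightarrow(3)$ is then immediate, as $S(v,v)=2\theta(v)v$. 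For $(3)\Rightarrow(2)$ the crux is a purely algebraic polarization lemma: a symmetric $S$ with $S(v,v)\in\operatorname{span}\{v\}$ for all $v$ must have the form $S(x,y)=\theta(x)y+\theta(y)x$ for a unique $\theta\in V^*$. I would prove this by writing $S(v,v)=\phi(v)v$, noting that $\phi$ is homogeneous of degree one, and polarizing; comparing coefficients on independent directions forces $\phi$ to be linear, whereupon $\theta:=\tfrac12\phi$ works.

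For $(2)\Rightarrow(1)$ I would insert $S(x,y)=\theta(x)y+\theta(y)x$ into the standard curvature transformation law for $\bar\nabla=\nabla-S$,
\begin{equation*}
\mathcal{R}^{\bar\nabla}(x,y)z=\mathcal{R}^\nabla(x,y)z-(\nabla_xS)(y,z)+(\nabla_yS)(x,z)+S(x,S(y,z))-S(y,S(x,z)),
\end{equation*}
and collect terms. With $\Phi(x,y):=(\nabla_x\theta)(y)$, the first-order terms produce $\Phi$-expressions and the quadratic terms produce $\theta\otimes\theta$-expressions; tracking the four index slots shows that they assemble exactly into $\mathcal{R}^{\bar\nabla}-\mathcal{R}^\nabla=H(\Xi)$ with $\Xi=-(\Phi+\theta\otimes\theta)\in V^*\otimes V^*$. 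Since a tensor is projectively flat if and only if it lies in the image of $H$, one has $\mathcal{P}(H(\Xi))=0$, and applying $\mathcal{P}$ gives $\mathcal{P}(\mathcal{R}^{\bar\nabla})=\mathcal{P}(\mathcal{R}^\nabla)$. The only genuine labor here is the index bookkeeping verifying the $H(\Xi)$ form.

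I expect the converse $(1)\Rightarrow(2)$ to be the main obstacle, and it is of a different character from the rest. The difference $\mathcal{R}^{\bar\nabla}-\mathcal{R}^\nabla$ depends on $\nabla S$ and not on $S$ alone, so $(1)$ constrains only the $1$-jet of $S$ through $\nabla S$ together with the quadratic term in $S$, and one cannot read off the pointwise structural form of $S$ from it directly; this is exactly where care is needed. My approach would be to use the equivalence from the previous paragraph to write $\mathcal{R}^{\bar\nabla}-\mathcal{R}^\nabla=H(\Xi)$ for some $2$-tensor field $\Xi$, and then to try to extract $\theta$ by combining the differential Bianchi identity with the transformation law for the Ricci traces $\rho_s$ and $\rho_a$ recorded in the text. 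Isolating $\theta$ and showing that the remaining, non-special part of $S$ must vanish is the delicate heart of the argument and the step most likely to require input beyond the pointwise algebra; I would therefore expect essentially all of the difficulty to concentrate here, which is consistent with the paper's decision to quote the result from \cite{SSV91}.
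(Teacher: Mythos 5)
The paper contains no proof of Theorem \ref{thm-1.6} to compare against: the authors explicitly omit it, noting it plays no role in their analysis, and refer to \cite{SSV91}. Within your proposal, the parts you actually carry out are correct and standard: the difference tensor $S=\nabla-\bar\nabla$ is symmetric by torsion freeness, the equivalence $(2)\Leftrightarrow(3)$ follows from the geodesic equation together with the polarization lemma characterizing symmetric $S$ with $S(v,v)\in\operatorname{span}\{v\}$, and the computation showing $\mathcal{R}^{\bar\nabla}-\mathcal{R}^\nabla=H(\Xi)$ with $\Xi=-(\nabla\theta+\theta\otimes\theta)$ gives $(2)\Rightarrow(1)$; this is consistent with the computation the paper performs in Section \ref{sect-5} for perturbations of the flat connection.

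The genuine gap is $(1)\Rightarrow(2)$, which you describe as "the delicate heart of the argument" but do not prove, so the proposal is incomplete as it stands. Moreover, no argument will close this gap, because the implication is false as literally stated: let $\nabla$ be the standard flat connection on $V$ and $\bar\nabla=\phi^*\nabla$ for a diffeomorphism $\phi$ whose inverse does not carry straight lines to straight lines. Both connections are torsion free with identically vanishing curvature, so $\mathcal{P}(\mathcal{R}^\nabla)=\mathcal{P}(\mathcal{R}^{\bar\nabla})=0$ and $(1)$ holds, yet the unparametrized geodesics (straight lines versus $\phi^{-1}$ of straight lines) differ, so $(2)$ and $(3)$ fail. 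The correct classical statements are $(2)\Leftrightarrow(3)$, $(2)\Rightarrow(1)$, and (for $m\ge3$) Weyl's theorem that $\mathcal{P}(\mathcal{R}^\nabla)=0$ if and only if $\nabla$ is locally projectively equivalent to a flat connection; equality of the Weyl projective curvatures of two arbitrary connections does not by itself force projective equivalence, just as equality of conformal Weyl tensors does not force conformal equivalence of metrics. Your instinct that $(1)\Rightarrow(2)$ "requires input beyond the pointwise algebra" was therefore sound, but the honest resolution is that the condition $(1)$ must be dropped or reformulated, not that a cleverer argument is needed. Since Theorem \ref{thm-1.6} is quoted only for completeness, none of this affects the paper's results or the other implications you established.
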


Theorem \ref{thm-1.5} gives rise to additional geometric representability questions. We will establish the following result in Section
\ref{sect-5}:

\begin{theorem}\label{thm-1.7}
 Let $\mathcal{A}\in\frak{A}(V)$.
\begin{enumerate}
\item If $\mathcal{A}$ is projectively flat, then there exists a projectively flat connection $\nabla$ on $TV$ so that
$\mathcal{R}_0^\nabla=\mathcal{A}$.
\smallbreak\item If $\mathcal{A}$ is projectively flat and Ricci symmetric, then there exists a projectively flat and
Ricci symmetric connection $\nabla$ on $TV$ so that $\mathcal{R}_0^\nabla=\mathcal{A}$.
\end{enumerate}
\end{theorem}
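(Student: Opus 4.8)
The plan is to realize both parts by a single explicit construction, exploiting the projective gauge freedom recorded in Theorem~\ref{thm-1.6}. Let $\nabla^0$ denote the flat connection on $TV$ whose Christoffel symbols vanish identically in linear coordinates $(x^1,\dots,x^m)$ centered at the origin; since $\mathcal{R}^{\nabla^0}=0$ we have $\mathcal{P}(\mathcal{R}^{\nabla^0})=0$, so any connection projectively equivalent to $\nabla^0$ is again projectively flat. Given $\mathcal{A}$ projectively flat, I write $\mathcal{A}=H(\Theta)$ for a uniquely determined $\Theta\in V^*\otimes V^*$; uniqueness follows because the computation of $\rho(H(\Theta))$ given above shows that $H(\Theta)=0$ forces $\Theta_s=\Theta_a=0$. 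I then set $\theta:=\Theta_{ij}\,x^i\,dx^j$, a $1$-form vanishing at the origin with $\partial_i\theta_j(0)=\Theta_{ij}$, and define
$$\nabla_xy:=\nabla^0_xy+\theta(x)\,y+\theta(y)\,x.$$

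First I would dispatch the two easy properties. The difference tensor $\theta(x)y+\theta(y)x$ is symmetric in $x$ and $y$, so $\nabla$ is torsion free; and by construction $\nabla$ and $\nabla^0$ satisfy condition~(2) of Theorem~\ref{thm-1.6}, so they are projectively equivalent and $\mathcal{P}(\mathcal{R}^\nabla)=\mathcal{P}(\mathcal{R}^{\nabla^0})=0$, i.e. $\nabla$ is projectively flat at every point. Next I would compute the curvature at the origin. The Christoffel symbols are $\Gamma_{ij}{}^k=\theta_i\delta_j^k+\theta_j\delta_i^k$, which vanish at the origin, so the quadratic terms drop out and $\mathcal{R}_0^\nabla(\partial_i,\partial_j)\partial_l=(\partial_i\Gamma_{jl}{}^k-\partial_j\Gamma_{il}{}^k)\partial_k$; substituting $\partial_i\Gamma_{jl}{}^k=\Theta_{ij}\delta_l^k+\Theta_{il}\delta_j^k$ reproduces precisely the expression for $H(\Theta)$ in Equation~(\ref{eqn-1.d}). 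Hence $\mathcal{R}_0^\nabla=H(\Theta)=\mathcal{A}$, which proves~(1).

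For~(2) I would first observe that the extra hypothesis forces $\Theta$ to be symmetric: the Ricci computation above gives $\rho_a(H(\Theta))=-(1+m)\Theta_a$, and since $\mathcal{A}$ is Ricci symmetric and $1+m\neq0$ we get $\Theta_a=0$. The whole point is that this is exactly the condition under which the linear $1$-form $\theta$ is closed, because $d\theta=\Theta_{ij}\,dx^i\wedge dx^j$ vanishes precisely when $\Theta$ is symmetric. I would then run the identical construction and verify Ricci symmetry through Theorem~\ref{thm-1.3}: a short trace gives $\Gamma_{ij}{}^j=(m+1)\theta_i$, so $\omega_{\mathcal{O}}=(m+1)\theta$ and $d\omega_{\mathcal{O}}=(m+1)\,d\theta=0$; condition~(1) of Theorem~\ref{thm-1.3} then guarantees that $\nabla$ is Ricci symmetric, completing~(2).

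The one step that is not mere bookkeeping is the compatibility in part~(2): a priori a gauge $1$-form chosen to produce the prescribed curvature $H(\Theta)$ need not be closed, and closedness is what Theorem~\ref{thm-1.3} requires for Ricci symmetry. I expect this to be the crux, and it resolves cleanly only because Ricci symmetry of $\mathcal{A}$, symmetry of $\Theta$, and closedness of the linear form $\theta$ turn out to be literally the same condition. Everything else --- torsion freeness, the curvature formula at the origin, and the trace identity $\Gamma_{ij}{}^j=(m+1)\theta_i$ --- reduces to routine index computation.
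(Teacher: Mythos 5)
Your proposal is correct and is essentially the paper's own argument: both use the connection with Christoffel symbols $\Gamma_{ij}{}^k=\theta_i\delta_j^k+\theta_j\delta_i^k$ for the linear form $\theta=\Theta_{ij}x^i\,dx^j$, and both hinge on the observation that Ricci symmetry of $\mathcal{A}$, symmetry of $\Theta$, and closedness of $\theta$ coincide. The only cosmetic difference is that you invoke Theorem~\ref{thm-1.6} (whose proof the paper omits) and Theorem~\ref{thm-1.3}(1) to get projective flatness and Ricci symmetry, whereas the paper obtains both directly from the pointwise identity $\mathcal{R}^{\nabla^\theta}=H(\theta\otimes\theta+\Psi)$ with $\Psi(x,y)=x\theta(y)$.
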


The geometrical realization theorems discussed previously are equivariant with respect to the natural action of the general linear group
$\GL(V)$. The decomposition of $\mathfrak{A}(V)$ as a $\GL(V)$ module
has 3 components so there are 8 natural geometric realization questions which are $\GL(V)$ equivariant. Since the flat connection
realizes the 0 curvature operator, there is only one natural $\GL$ equivariant geometric realization question which is not covered by the
forgoing results. It is answered, in the negative, by the following result which we will establish in Section
\ref{sect-6}:

\begin{theorem}\label{thm-1.8} If $\nabla$ is a projectively flat, Ricci antisymmetric, torsion free connection, then $\nabla$
is flat. Thus if $0\ne\mathcal{A}\in\mathfrak{A}(V)$ is projectively flat and Ricci antisymmetric, then $\mathcal{A}$ is not geometrically
realizable by a projectively flat, Ricci antisymmetric, torsion free connection.
\end{theorem}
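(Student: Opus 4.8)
The plan is to convert the hypotheses into pointwise algebra about a single tensor and then extract flatness from the differential second Bianchi identity. First I would reduce the curvature to a normal form. Because $\nabla$ is projectively flat, at each point we may write $\mathcal{R}^\nabla=H(\Theta)$ for a $(0,2)$--tensor field $\Theta$, with $H$ as in Equation (\ref{eqn-1.d}). The computation preceding Equation (\ref{eqn-x}) gives $\rho(H(\Theta))=(1-m)\Theta_s-(1+m)\Theta_a$, so the hypothesis $\rho_s(\mathcal{R}^\nabla)=0$ together with $m\ge 3$ (hence $1-m\ne0$) forces $\Theta_s=0$; thus $\Theta$ is antisymmetric, $\rho(\mathcal{R}^\nabla)=-(1+m)\Theta$, and the curvature takes the reduced form
$$\mathcal{R}^\nabla(x,y)z=2\Theta(x,y)z+\Theta(x,z)y-\Theta(y,z)x\,.$$
Fix a basis $\{e_i\}$ of $V$ and write $\Theta_{ij}=\Theta(e_i,e_j)$, $\mathcal{R}^\nabla(e_i,e_j)e_k=R_{ijk}{}^l e_l$, so that $R_{ijk}{}^l=2\Theta_{ij}\delta_k^l+\Theta_{ik}\delta_j^l-\Theta_{jk}\delta_i^l$.

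The heart of the argument is the observation that neither projective flatness nor Ricci antisymmetry is a differential constraint on its own, so flatness can only come from the integrability encoded in the second Bianchi identity $\sum_{\mathrm{cyc}}(\nabla \mathcal{R}^\nabla)=0$. I would substitute the reduced form into it and perform two successive contractions of the free endomorphism index against the cyclically permuted slots. The first contraction isolates the totally cyclic part and yields $(m+1)\,(d\Theta)=0$, hence $d\Theta=0$, i.e. $\nabla_i\Theta_{jk}+\nabla_j\Theta_{ki}+\nabla_k\Theta_{ij}=0$. Using this, the identity collapses to $F_{ijl}\delta_k^m+F_{jkl}\delta_i^m+F_{kil}\delta_j^m=0$ where $F_{ijl}:=\nabla_i\Theta_{jl}-\nabla_j\Theta_{il}$, and the second contraction gives $(m-2)F_{ijl}=0$, so $F_{ijl}=0$ since $m\ge 3$. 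Thus $\nabla\Theta$ is symmetric in its first two indices; since it is automatically antisymmetric in its last two (as $\Theta$ is a $2$--form), the standard ``symmetric in one pair, antisymmetric in an overlapping pair'' argument forces $\nabla\Theta=0$, so $\Theta$ is parallel.

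Finally I would feed parallelism back into the Ricci commutation identity. Since $\Theta$ is parallel, $(\nabla_i\nabla_j-\nabla_j\nabla_i)\Theta_{kl}=0$, and re-inserting $R_{ijk}{}^l$ and using the antisymmetry of $\Theta$ collapses this to the quadratic relation $2\Theta_{ij}\Theta_{kl}+\Theta_{ik}\Theta_{jl}-\Theta_{jk}\Theta_{il}=0$. Contracting over $i$ and $l$ gives $3\,\Theta^2=0$ for the matrix product $\Theta^2$; because $(\Theta^2)_{ii}=-\sum_k \Theta_{ik}^2$, a real skew-symmetric matrix with $\Theta^2=0$ must vanish, so $\Theta=0$ and $\mathcal{R}^\nabla=H(\Theta)=0$. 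This proves the first assertion. The second assertion is then immediate: a projectively flat, Ricci antisymmetric, torsion free connection $\nabla$ with $\mathcal{R}^\nabla_0=\mathcal{A}$ would be flat by the first part, whence $\mathcal{A}=\mathcal{R}^\nabla_0=0$; so no nonzero such $\mathcal{A}$ is realizable.

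The main obstacle is the second step: recognizing that the pointwise data are insufficient and that the differential second Bianchi identity is the only available source of flatness, and then arranging the two contractions so that the dimension factors $(m+1)$ and $(m-2)$ are both nonzero — which is precisely where the standing hypothesis $m\ge 3$ is used. The subsequent passage from ``$\Theta$ parallel'' to ``$\Theta=0$'' via the Ricci identity and a linear-algebra fact about skew matrices is comparatively routine.
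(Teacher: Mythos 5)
Your proof is correct and follows essentially the same route as the paper: write $\mathcal{R}^\nabla=H(\Theta)$ with $\Theta$ antisymmetric, use the second Bianchi identity to force $\nabla\Theta=0$, and then apply the commutation (Ricci) identity to obtain a quadratic relation that kills $\Theta$. The only differences are mechanical --- you extract $\nabla\Theta=0$ by two trace contractions (with factors $2(m+1)$ and $m-2$) where the paper inspects coefficients for special index choices, and you conclude via $\Theta^2=0$ for a real skew matrix where the paper sets $x=z$, $y=w$ to get $6\omega(x,y)^2=0$.
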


The geometric representability theorems of this paper can be summarized in the following table; the non-zero components of
$\mathcal{A}$ are indicated by
$\star$.
$$\begin{array}{|c|c|c|r|c|c|c|r|}\noalign{\hrule}
\mathfrak{W}(V)&S^2(V^*)&\Lambda^2(V^*)&&\mathfrak{W}(V)&S^2(V^*)&\Lambda^2(V^*)&\\
\noalign{\hrule}\star&\star&\star&\text{yes}&0&\star&\star&\text{yes}\\
\noalign{\hrule}\star&\star&0&\text{yes}&0&\star&0&\text{yes}\\
\noalign{\hrule}\star&0&\star&\text{yes}&0&0&\star&\text{no}\\
\noalign{\hrule}\star&0&0&\text{yes}&0&0&0&\text{yes}\\\noalign{\hrule}
\end{array}$$

\section{The proof of Theorem \ref{thm-1.1}}\label{sect-2}

Fix a basis $\{e_i\}$ for $V$. If
$\mathcal{A}\in\mathfrak{A}(V)$, expand
$\mathcal{A}(e_i,e_j)e_k=A_{ijk}{}^\ell e_\ell$. Theorem \ref{thm-1.1} will follow from the following observation:

\begin{lemma}\label{lem-2.1}
Let $\Gamma_{uv}{}^l:=\textstyle\frac13(A_{wuv}{}^l+A_{wvu}{}^l)x^w$ be the Christoffel symbols of
a connection $\nabla$. Then $\nabla$ is torsion free and
$\mathcal{R}^\nabla_0(\partial_{x_i},\partial_{x_j})\partial_{x_k}=A_{ijk}{}^l\partial_{x_l}$.
\end{lemma}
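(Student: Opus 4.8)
The plan is to treat the two assertions separately: torsion-freeness will be immediate from the shape of the proposed Christoffel symbols, while the curvature identity will reduce, at the origin, to a purely algebraic computation in which both defining symmetries of $\mathfrak{A}(V)$ are used exactly once.

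First I would dispose of torsion-freeness. Since $V$ carries global linear coordinates, the coordinate fields commute, so the torsion is $\nabla_{\partial_{x_u}}\partial_{x_v}-\nabla_{\partial_{x_v}}\partial_{x_u}=(\Gamma_{uv}{}^l-\Gamma_{vu}{}^l)\partial_{x_l}$, and $\nabla$ is torsion free precisely when $\Gamma_{uv}{}^l=\Gamma_{vu}{}^l$. The proposed symbols $\Gamma_{uv}{}^l=\frac13(A_{wuv}{}^l+A_{wvu}{}^l)x^w$ are manifestly unchanged under interchanging $u$ and $v$, so this symmetry holds by construction.

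For the curvature I would expand $\mathcal{R}^\nabla$ in terms of the Christoffel symbols, namely $\mathcal{R}^\nabla(\partial_{x_i},\partial_{x_j})\partial_{x_k}=(\partial_{x_i}\Gamma_{jk}{}^l-\partial_{x_j}\Gamma_{ik}{}^l+\Gamma_{im}{}^l\Gamma_{jk}{}^m-\Gamma_{jm}{}^l\Gamma_{ik}{}^m)\partial_{x_l}$, and then evaluate at the origin. The decisive simplification is that each $\Gamma_{uv}{}^l$ is linear in the coordinates and hence vanishes at $0$, so the two quadratic terms drop out and only the first-derivative terms survive. Differentiating gives $\partial_{x_i}\Gamma_{jk}{}^l=\frac13(A_{ijk}{}^l+A_{ikj}{}^l)$ and the analogous expression for the other term, whence
$$\mathcal{R}^\nabla_0(\partial_{x_i},\partial_{x_j})\partial_{x_k}=\textstyle\frac13(A_{ijk}{}^l+A_{ikj}{}^l-A_{jik}{}^l-A_{jki}{}^l)\partial_{x_l}\,.$$

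It then remains to show that the bracketed expression equals $3A_{ijk}{}^l$, and this is the only real content of the lemma: it is here that both symmetries enter and the normalization $\frac13$ is forced. Writing antisymmetry \eqref{eqn-1.a} as $A_{jik}{}^l=-A_{ijk}{}^l$ and the first Bianchi identity \eqref{eqn-1.b} as $A_{ijk}{}^l+A_{jki}{}^l+A_{kij}{}^l=0$, I would rewrite $A_{ikj}{}^l=-A_{kij}{}^l=A_{ijk}{}^l+A_{jki}{}^l$ and $-A_{jik}{}^l=A_{ijk}{}^l$; substituting these into the bracket collapses it to $3A_{ijk}{}^l$, which finishes the proof. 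The main obstacle is simply keeping the transposition and cyclic relations straight among the four terms, but no term requires anything beyond these two identities, so the computation is short once the quadratic terms have been eliminated.
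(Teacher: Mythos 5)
Your proposal is correct and follows essentially the same route as the paper: torsion-freeness from the manifest $u\leftrightarrow v$ symmetry, vanishing of the quadratic Christoffel terms at the origin, and the same four-term bracket collapsed to $3A_{ijk}{}^l$ using antisymmetry and the first Bianchi identity. The only difference is cosmetic bookkeeping in how the two identities are combined in the final step.
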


\begin{proof} Clearly $\Gamma_{uv}{}^\ell=\Gamma_{vu}{}^\ell$ so $\nabla$ is torsion free. As $\Gamma$ vanishes at the origin,
we may use the curvature symmetries to compute:
\medbreak\qquad
$\mathcal{R}^\nabla_0(\partial_{x_i},\partial_{x_j})\partial_{x_k}
=\textstyle\left\{\partial_{x_i}\Gamma_{jk}{}^l-\partial_{x_j}\Gamma_{ik}{}^l\right\}(0)\partial_{x_l}$
\smallbreak\qquad\quad
$=\textstyle\frac13\left\{A_{ijk}{}^l+A_{ikj}{}^l-A_{jik}{}^l-A_{jki}{}^l\right\}\partial_{x_l}$
\smallbreak\qquad\quad
$=\textstyle\frac13\left\{A_{ijk}{}^l-A_{kij}{}^l+A_{ijk}{}^l-A_{jki}{}^l\right\}\partial_{x_l}$
\smallbreak\qquad\quad
$=\textstyle A_{ijk}{}^l\partial_{x_l}$.\end{proof}

\section{The proof of Theorem \ref{thm-1.3}}\label{sect-3}
\begin{proof}We have by the first Bianchi identity of Equation (\ref{eqn-1.b}) that
$$\operatorname{Tr}\{\mathcal{R}(x,y)\}-\rho(y,x)+\rho(x,y)=0\,.$$
This shows that Assertions (2) and (3) of Theorem \ref{thm-1.3} are equivalent. Note
\begin{eqnarray*}
&&\mathcal{R}_{ijk}{}^l\partial_{x_\ell}=\nabla_{\partial_{x_i}}\nabla_{\partial_{x_j}}\partial_{x_k}
-\nabla_{\partial_{x_j}}\nabla_{\partial_{x_i}}\partial_{x_k}\\
&&\qquad=\{\partial_{x_i}\Gamma_{jk}{}^\ell-\partial_{x_j}\Gamma_{ik}{}^\ell
+\Gamma_{in}{}^\ell\Gamma_{jk}{}^n-\Gamma_{jn}{}^\ell\Gamma_{ik}{}^n\}\partial_{x_\ell},\\
&&\operatorname{Tr}\{\mathcal{R}_{ij}\}dx_i\wedge
dx_j=\{\partial_{x_i}\Gamma_{jk}{}^k-\partial_{x_j}\Gamma_{ik}{}^k
+\Gamma_{in}{}^k\Gamma_{jk}{}^n-\Gamma_{jn}{}^k\Gamma_{ik}{}^n\}dx_i\wedge
dx_j\\
&&\qquad=\textstyle\{\partial_{x_i}\Gamma_{jk}{}^k-\partial_{x_j}\Gamma_{ik}{}^k\}dx_i\wedge dx_j
  =2d\{\Gamma_{ij}{}^jdx_i\}\,.
\end{eqnarray*}
Thus Assertions (1) and (2) of Theorem \ref{thm-1.3} are equivalent. Finally, we compute:
$$\nabla_{\partial_{x_i}}\{e^{\Phi}dx_1\wedge...\wedge dx_m\}
=\textstyle\{\partial_{x_i}\Phi-\sum_k\Gamma_{ik}{}^k\}\{e^\Phi dx_1\wedge...\wedge dx_m\}\,.$$

Thus there exists a parallel volume form on $\mathcal{O}$ $\Leftrightarrow$ $\Gamma_{ik}{}^kdx_i$
is exact. As every closed $1$-form is locally exact, Assertions (1) and (4) of Theorem \ref{thm-1.3} are equivalent.
\end{proof}

\section{The proof of Theorem \ref{thm-1.4}}\label{sect-4}
We extend the discussion in \cite{GN08}. Fix a basis $\{e_1,...,e_m\}$ for $V$ to identify $V$ with $\mathbb{R}^m$; let $x=(x_1,...,x_m)$
be the induced system of coordinates on $V$. Since any neighborhood of $0\in V$ contains an open subset which is real analytically
diffeomorphic to all of $V$, Theorem \ref{thm-1.4} will follow from the following result which is of interest in its own right:
\begin{theorem}\label{thm-4.1}
Let $\mathcal{A}\in\mathfrak{A}(V)$. There exists a torsion free real analytic connection $\nabla$ defined on an open neighborhood
$\mathcal{O}$ of
$0\in V$ so that $\mathcal{R}_0^\nabla=\mathcal{A}$ and so that
$\rho(\mathcal{R}_P^\nabla)(\partial_{x_i},\partial_{x_j})=\rho(\mathcal{A})(e_i,e_j)$ for all $P\in\mathcal{O}$.
\end{theorem}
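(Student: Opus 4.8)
The plan is to deduce Theorem~\ref{thm-1.4} from Theorem~\ref{thm-4.1} and then to prove Theorem~\ref{thm-4.1} by an explicit construction followed by a recursive correction. The reduction is immediate: if $\nabla$ satisfies $\mathcal{R}^\nabla_0=\mathcal{A}$ and has the \emph{constant} Ricci tensor $\rho(\mathcal{R}^\nabla_P)=\rho(\mathcal{A})$ for all $P$, then when $\mathcal{A}$ is Ricci symmetric (resp.\ antisymmetric, resp.\ flat) the tensor $\rho(\mathcal{A})$ is symmetric (resp.\ antisymmetric, resp.\ zero), so $\rho(\mathcal{R}^\nabla)$ inherits that property at every point and $\nabla$ is Ricci symmetric (resp.\ antisymmetric, resp.\ flat). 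The remark preceding Theorem~\ref{thm-4.1} then transports the connection from $\mathcal{O}$ to all of $V$.

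For Theorem~\ref{thm-4.1} I would begin from the connection $\Gamma^{(0)}$ of Lemma~\ref{lem-2.1}, which is polynomial, hence real analytic, and already satisfies $\mathcal{R}^\nabla_0=\mathcal{A}$. Since $\Gamma^{(0)}$ is linear in $x$, its associated $1$-form $\omega=\Gamma^{(0)}_{ij}{}^j\,dx^i$ has linear coefficients, so $d\omega$ has constant coefficients; by the identity $\operatorname{Tr}(\mathcal{R})=2\,d\omega$ proved in Theorem~\ref{thm-1.3} (where the quadratic $\Gamma\Gamma$ terms cancel in the trace), the antisymmetric Ricci, which is $-\tfrac12\operatorname{Tr}(\mathcal{R})$ and hence a constant multiple of $d\omega$, is already the desired constant $\rho_a(\mathcal{A})$. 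Thus the sole defect is the symmetric Ricci: writing schematically $\rho=\partial\Gamma+\Gamma\Gamma$ and using that $\Gamma^{(0)}$ is linear, one gets $\rho_s(\mathcal{R}^{\Gamma^{(0)}})=\rho_s(\mathcal{A})+Q$, where $Q$ is a symmetric-bilinear-form valued homogeneous quadratic polynomial in $x$ arising from the $\Gamma^{(0)}\Gamma^{(0)}$ contraction.

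The core step is to kill $Q$ by adding to $\Gamma^{(0)}$ a correction $C=\sum_{d\ge3}C^{(d)}$ with each $C^{(d)}$ homogeneous of degree $d$ and symmetric in its lower indices. Such a $C$ vanishes to order $\ge3$ at the origin, so it leaves $\mathcal{R}^\nabla_0=\mathcal{A}$ unchanged, and one keeps its trace part closed so that $\rho_a$ is undisturbed. The lowest-order effect of $C^{(d)}$ on $\rho_s$ is through $\partial_\ell C_{jk}{}^\ell$, a symmetric tensor of degree $d-1$, and for any prescribed homogeneous symmetric $S_{jk}$ of degree $d-1$ the Euler-operator formula $C_{jk}{}^\ell=\tfrac1{m+d-1}x^\ell S_{jk}$ solves $\partial_\ell C_{jk}{}^\ell=S_{jk}$. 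This lets me cancel the non-constant part of $\rho_s$ order by order, the defect living in even degrees $2,4,6,\dots$ and the corrections in odd degrees $3,5,7,\dots$, since each correction feeds back into $\rho_s$ only through strictly higher-degree products $\Gamma^{(0)}C$ and $CC$. The abundance of free components in $C^{(d)}$ makes the solvability at each order routine.

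The main obstacle is therefore not solvability but convergence: the recursion produces only a formal power series $\Gamma^{(0)}+C$, and I must show it converges on a neighborhood $\mathcal{O}$ of $0$ so that $\nabla$ is genuinely real analytic rather than a formal object. I would control the feedback generated by the $\Gamma\Gamma$ products by a majorant / Cauchy-estimate argument, exploiting the decaying coefficients $\tfrac1{m+d-1}$ supplied by the Euler-operator solution; this is the technical heart of the proof and the point at which the local, rather than global, nature of $\mathcal{O}$ is forced.
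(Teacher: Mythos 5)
Your strategy is essentially the paper's: start from the linear connection of Lemma~\ref{lem-2.1}, observe that the antisymmetric Ricci is already the constant $\rho_a(\mathcal{A})$, and then cancel the non-constant part of $\rho_s$ order by order by adding higher-order corrections $C$ solving a divergence equation $\partial_\ell C_{jk}{}^\ell=S_{jk}$, with the even/odd degree bookkeeping exactly matching the paper's grading $\Gamma_\nu=O(|x|^{2\nu-1})$. But there are two genuine gaps. First, your explicit solution operator $C_{jk}{}^\ell=\tfrac1{m+d-1}x^\ell S_{jk}$ does solve $\partial_\ell C_{jk}{}^\ell=S_{jk}$, yet it violates the side condition you yourself identify as necessary: its trace $1$-form $\tfrac1{m+d-1}x^kS_{jk}\,dx^j$ is in general not closed. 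For instance with $m=3$, $d=3$, $S_{11}=x_2^2$ and all other components zero, the trace form is a multiple of $x_1x_2^2\,dx^1$, whose differential $2x_1x_2\,dx^2\wedge dx^1$ is nonzero. Since $\rho_a(\mathcal{R}^\nabla)$ is governed by $d\omega$ for $\omega=\Gamma_{ij}{}^j\,dx^i$ (the quadratic terms cancel in the trace, as in the proof of Theorem~\ref{thm-1.3}), each such correction perturbs the antisymmetric Ricci by a non-constant amount, and the conclusion $\rho(\mathcal{R}^\nabla_P)\equiv\rho(\mathcal{A})$ fails. The paper's Lemma~\ref{lem-4.3} avoids this by integrating $\Theta_{ij}$ in a coordinate direction $k_{ij}\notin\{i,j\}$ (this is where $m\ge3$ enters), which makes the trace of the correction identically zero rather than merely closed; you need a comparable modification of your solution operator.

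Second, you defer the convergence argument entirely, and it is not a routine afterthought: it is the bulk of Section~\ref{sect-4} (the Banach spaces $\mathcal{H}(\delta,\nu)$ of holomorphic functions vanishing to order $\nu$, the solvability estimate $\|\Gamma\|_{\delta,\nu+1}\le\|\Theta\|_{\delta,\nu}$, the inductive bounds $\|\Gamma_\nu\|_{\delta,2\nu-1}\le C^{2\nu-1}$, and the geometric-series control of the quadratic feedback $\sum_{\mu}\Gamma_\mu\star\Gamma_{\nu+1}$ after shrinking $\delta$ so that $C^2\delta^2<1$). The factor $\tfrac1{m+d-1}$ you invoke does not by itself control the growth in the number and size of the product terms feeding into the degree-$d$ defect, so some quantitative scheme of this kind must actually be carried out. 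As written, your argument produces only a formal power series, not the real analytic connection the theorem asserts.
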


The remainder of this section is devoted to the proof of Theorem \ref{thm-1.4}. We complexify and set
$V_{\mathbb{C}}:=V\otimes_{\mathbb{R}}\mathbb{C}=\mathbb{C}^m$. Let
$$|z|:=(|z_1|^2+...+|z_m|^2)^{1/2}\quad\text{and}\quad B_\delta:=\{z\in\mathbb{C}^m:|z|<\delta\}$$
be the Euclidean length of $z\in\mathbb{C}^m$ and the open ball of radius $\delta>0$ about the origin, respectively. Let
$\mathcal{H}_\delta$ be the ring of all holomorphic functions $q$ on $B_\delta$ such that $q(x)$ is real for
$x\in\mathbb{R}^m\subset\mathbb{C}^m$. For
$\nu=0,1,2,...$ and for
$q\in\mathcal{H}_\delta$, set
$$||q||_{\delta,\nu}:=\sup_{0<|z|<\delta}|q(z)|\cdot |z|^{-\nu}$$
where, of course, $||q||=\infty$ is possible. Let $\mathcal{H}(\delta,\nu):=\{q\in\mathcal{H}_\delta:||q||_{\delta,\nu}<\infty\}$;
$(\mathcal{H}(\delta,\nu),||_{\delta,\nu})$ is a Banach space. Clearly if $q\in\mathcal{H}(\delta,\nu)$, then we have the estimate
$$|q(z)|\le||q||_{\delta,\nu}\cdot|z|^\nu\quad\text{for all }\quad z\in B_\delta\,.$$
It is immediate that $\oplus_\nu\mathcal{H}(\delta,\nu)$ is a graded ring, i.e.
$$\mathcal{H}(\delta,\nu)\cdot\mathcal{H}(\delta,\mu)\subset\mathcal{H}(\delta,\mu+\nu)\,.$$
If $W$ is an auxiliary real vector space, we let $\mathcal{H}(\delta,\nu,W):=W\otimes_{\mathbb{R}}\mathcal{H}(\delta,\nu)$ be the
appropriate function space of holomorphic functions taking values in $W$; of particular interest will be the function spaces
$\mathcal{H}(\delta,\nu,S^2(V^*))$ and $\mathcal{H}(\delta,\nu,\mathfrak{A}(V))$. Given a basis $\{f_i\}$ for $W$ and given
$q\in\mathcal{H}(\delta,\nu,W)$, we expand $q=\sum_iq_if_i$ for $q_i\in\mathcal{H}(\delta,\nu)$ and define a Banach norm on
$\mathcal{H}(\delta,\nu,W)$, by setting
$$||q||_{\delta,\nu}:=\sup_i||q_i||_{\delta,\nu}\,.$$
Changing the basis for $W$ yields an equivalent norm.

We shall use the canonical coordinate frame to
identify $S^2(T^*V)$ with $V\times S^2(V^*)$ henceforth. The proof of Theorem \ref{thm-4.1} will be based on the following technical Lemma:
\begin{lemma}\label{lem-4.2}
Let $\mathcal{A}\in\mathfrak{A}(\mathbb{R}^m)$. There exists $\delta=\delta(\mathcal{A})>0$,
$C=C(\mathcal{A})>0$, and a sequence $\Gamma_\nu\in\mathcal{H}(\delta,2\nu-1,S^2(V^*)\otimes V)$ for $\nu=1,2,...$ so that:
\begin{enumerate}
\item $\Gamma_{1,uv}{}^l:=\textstyle\frac13(A_{wuv}{}^l+A_{wvu}{}^l)x^w$. 
\item $||\Gamma_\nu||_{\delta,2\nu-1}\le C^{2\nu-1}$.
\item $\Gamma_{\nu,ij}{}^j=0$ for $\nu\ge2$.
\item If $\nabla_\nu$ has
Christoffel symbol $\Gamma_1+...+\Gamma_\nu$, then $||\rho_s(\mathcal{R}^{\nabla_\nu})-\rho_s(\mathcal{A})||_{\delta,2\nu}\le C^{2\nu}$.
\end{enumerate}\end{lemma}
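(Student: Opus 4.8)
The plan is to prove Lemma~\ref{lem-4.2} by induction on $\nu$, correcting the Christoffel symbols one homogeneous order at a time so that $\rho_s(\mathcal{R}^{\nabla_\nu})$ agrees with the constant $\rho_s(\mathcal{A})$ to order $2\nu$ at the origin, while arranging that the bounds (2) and (4) persist with a \emph{single} constant $C$. The base case $\nu=1$ uses the connection of Lemma~\ref{lem-2.1}: since $\Gamma_1$ is linear in $x$, the curvature $\mathcal{R}^{\nabla_1}=\partial\Gamma_1+\Gamma_1\cdot\Gamma_1$ is the sum of a constant term, equal to $\mathcal{A}$ by Lemma~\ref{lem-2.1}, and a homogeneous quadratic term; hence $\rho_s(\mathcal{R}^{\nabla_1})-\rho_s(\mathcal{A})$ is homogeneous of degree $2$, which gives (4) for $\nu=1$ once $C$ is enlarged to absorb (2) as well.

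The structural fact that drives the whole induction is a parity observation: if every $\Gamma_\mu$ is homogeneous of odd degree $2\mu-1$, then $\partial\Gamma_\mu$ has even degree $2\mu-2$ and every product $\Gamma_\mu\cdot\Gamma_\lambda$ has even degree $2(\mu+\lambda)-2$, so the curvature, the Ricci tensor, and the error $\rho_s(\mathcal{R}^{\nabla_\nu})-\rho_s(\mathcal{A})$ all involve only even-degree homogeneous pieces. This explains why the error is expected to improve by two orders at each step, and it also shows that $\mathcal{R}_0^{\nabla_\nu}=\mathcal{A}$ is preserved, since each $\Gamma_\mu$ with $\mu\ge2$ vanishes to order at least $3$. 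I would record this parity bookkeeping first, as it is used repeatedly.

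For the inductive step I would assume $\Gamma_1,\dots,\Gamma_\nu$ are constructed and let $E_{jk}$ denote the leading homogeneous part, of degree $2\nu$, of the (symmetric) error $\rho_s(\mathcal{R}^{\nabla_\nu})-\rho_s(\mathcal{A})$. Adding a trace-free $\Gamma_{\nu+1}$ of degree $2\nu+1$ changes the curvature by $\partial\Gamma_{\nu+1}$ (degree $2\nu$) plus cross terms $\Gamma^{(\nu)}\cdot\Gamma_{\nu+1}$ of degree at least $2\nu+2$; the induced degree-$2\nu$ change in $\rho_{jk}=\mathcal{R}_{ijk}{}^i$ is $\partial_i\Gamma_{\nu+1,jk}{}^i-\partial_j\Gamma_{\nu+1,ik}{}^i$, and the second term drops out because the trace-free condition (3), together with symmetry of the lower indices, gives $\sum_i\Gamma_{\nu+1,ik}{}^i=\sum_i\Gamma_{\nu+1,ki}{}^i=0$. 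Thus the change is exactly the symmetric tensor $\partial_i\Gamma_{\nu+1,jk}{}^i$, and it suffices to solve the divergence equation $\partial_i\Gamma_{\nu+1,jk}{}^i=-E_{jk}$ with $\Gamma_{\nu+1}$ symmetric in $(j,k)$ and trace-free. I would use the ansatz $\Gamma_{\nu+1,jk}{}^i=-\tfrac{1}{2\nu+m}E_{jk}x^i+\Delta_{jk}{}^i$; by Euler's identity $x^a\partial_a E_{jk}=2\nu E_{jk}$ the first term already has the desired divergence, and $\Delta$ is chosen divergence-free so as to restore the trace-free condition, its trace being forced to equal the prescribed $1$-form $\tfrac{1}{2\nu+m}\sum_i E_{ji}x^i$; solvability of this correction is where $m\ge3$ and the $\GL(V)$-decomposition enter. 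The antisymmetric Ricci needs no separate treatment: by (3) only $\Gamma_1$ contributes to $\omega_{\mathcal{O}}=\Gamma_{ij}{}^j\,dx^i$, so $d\omega_{\mathcal{O}}$ is constant and $\rho_a$ is automatically the constant $\rho_a(\mathcal{A})$ by Theorem~\ref{thm-1.3}.

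The remaining work is quantitative. I would extract $E_{jk}$ and bound its $\|\cdot\|_{\delta,2\nu}$-norm from the inductive hypothesis using Cauchy estimates to isolate homogeneous parts, then control the operations in the ansatz (division by $2\nu+m$, multiplication by $x^i$, the trace correction) via the graded-ring inclusion $\mathcal{H}(\delta,\nu)\cdot\mathcal{H}(\delta,\mu)\subset\mathcal{H}(\delta,\mu+\nu)$, thereby obtaining (2) for $\Gamma_{\nu+1}$; the new error comes only from the cross terms $\Gamma^{(\nu)}\cdot\Gamma_{\nu+1}$ and is quadratic in already-bounded quantities, which yields (4) at order $2\nu+2$. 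The main obstacle is precisely this last closing of the induction on a \emph{fixed} $C$: one must show the nonlinear feedback through the cross terms admits a uniform majorant, so that a single geometric constant dominates the recursion for all $\nu$ (after which summing the series produces the real-analytic connection needed for Theorem~\ref{thm-4.1}). A secondary but genuine difficulty is verifying that the trace-correcting term $\Delta$ can always be found with norm comparable to that of $E$, so that the trace-free constraint does not destroy the estimate.
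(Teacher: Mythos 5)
Your overall architecture matches the paper's: start from the connection of Lemma~\ref{lem-2.1}, add trace-free corrections $\Gamma_{\nu+1}$ vanishing to order $2\nu+1$ so that only $\rho_s$ needs adjusting (the trace-free condition killing the $\partial_j\Gamma_{ik}{}^i$ term and freezing $\rho_a$), and observe that the new error comes only from quadratic cross terms. The one structural difference is that the paper does not work with homogeneous polynomial corrections cancelling only the leading part of the error: it chooses $\Gamma_{\nu+1}$ holomorphic with $\rho_s(\mathcal{L}(\Gamma_{\nu+1}))=-\rho_s(\mathcal{R}^{\nabla_\nu})+\rho_s(\mathcal{A})$ exactly, so the entire new error is the cross term $\sum_{\mu\le\nu+1}\Gamma_\mu\star\Gamma_{\nu+1}$ (up to the factor on the diagonal term), and the uniform constant $C=8m^2C_1$ then closes via a single geometric series once $\delta$ satisfies $C\delta^2<1$ and $C^3\delta^2/(1-C^2\delta^2)\le C_1$. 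Your parity bookkeeping is correct but is extra machinery the paper does not need.

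The genuine gap is at the solvability step, which is the real content of the argument (the paper isolates it as Lemma~\ref{lem-4.3}): given symmetric $\Theta$, produce $\Gamma$ with $\partial_i\Gamma_{jk}{}^i=\Theta_{jk}$, \emph{and} $\Gamma_{jk}{}^k=0$, \emph{and} $\|\Gamma\|_{\delta,\nu+1}\le\|\Theta\|_{\delta,\nu}$. Your Euler-identity ansatz $-\tfrac1{2\nu+m}E_{jk}x^i$ solves the divergence equation but violates the trace constraint, and the correction $\Delta$ you then require --- divergence-free with a prescribed nonzero trace --- is a problem of exactly the same type as the one you started with: any natural ansatz for $\Delta$ (e.g.\ combinations of $\delta_j^i\phi_k+\delta_k^i\phi_j$) fixes the trace at the cost of reintroducing a divergence, so the reduction does not visibly terminate, and no norm bound for $\Delta$ in terms of $E$ is obtained. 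The paper cuts through this by using $m\ge3$ concretely: for each pair $(j,k)$ pick an index $k_{jk}=k_{kj}\notin\{j,k\}$ and set $\Gamma_{jk}{}^\ell=\delta^\ell_{k_{jk}}\int_{k_{jk}}\Theta_{jk}$, where $\int_k$ is the scaled antiderivative $z_k\int_0^1\Theta(\dots,tz_k,\dots)\,dt$. Because the output index is forced to differ from $j$ and $k$, the trace condition is automatic, the divergence $\partial_i\Gamma_{jk}{}^i$ collapses to the single term $\partial_{z_{k_{jk}}}\int_{k_{jk}}\Theta_{jk}=\Theta_{jk}$, and the bound $\|\Gamma\|_{\delta,\nu+1}\le\|\Theta\|_{\delta,\nu}$ is immediate --- which is also precisely what makes your ``main obstacle'' (closing the induction on a fixed $C$) routine. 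Without some such construction your proof does not go through as written.
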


We suppose for the moment such a sequence has been constructed. Choose $\delta_1<\delta$ so $C^2\delta_1<1$.  We set
$\Gamma:=\Gamma_1+\Gamma_2+...$. By Assertion (2), this series converges uniformly for $z\in B_\delta$. Thus the
associated connection $\nabla$ is a real torsion free connection on the real ball of radius $\delta_1$ in $V$. Since uniform convergence in
the holomorphic context implies the uniform convergence on compact subsets of all derivatives, $\Gamma$ is a real analytic
connection near
$0\in V$ with
$R^\nabla=\lim_{\nu\rightarrow\infty}R^{\nabla_\nu}$. Since $\Gamma_\nu=\Gamma_1+O(|x|^3)$, we apply Lemma \ref{lem-2.1} to see
$\mathcal{R}_0^{\nabla_\nu}=\mathcal{R}_0^{\nabla_1}=\mathcal{A}$. 
Define $\mathcal{L}(\Gamma_\nu)$ and $\Gamma_\nu\star\Gamma_\mu$ by setting:
\begin{eqnarray*}
&&\mathcal{L}(\Gamma_\nu)_{ijk}{}^l:=\partial_{z_i}\Gamma_{\nu,jk}{}^l-\partial_{z_j}\Gamma_{\nu,ik}{}^l,\\
&&\{\Gamma_\mu\star\Gamma_\nu\}_{ijk}{}^\ell:=
\Gamma_{\mu,in}{}^\ell\Gamma_{\nu,jk}{}^n
+\Gamma_{\nu,in}{}^\ell\Gamma_{\mu,jk}{}^n
-\Gamma_{\mu,jn}{}^\ell\Gamma_{\nu,ik}{}^n-\Gamma_{\nu,jn}{}^\ell\Gamma_{\mu,ik}{}^n\,.
\end{eqnarray*}
We then have
\begin{eqnarray}
&&\mathcal{R}^{\nabla_\nu}=\sum_{\mu\le\nu}\mathcal{L}(\Gamma_\mu)+\textstyle\frac12\left\{\sum_{1\le\mu_1\le\nu}\Gamma_{\mu_1}\right\}\star
\left\{\sum_{1\le\mu_2\le\nu}\Gamma_{\mu_2}\Gamma_{\mu_2}\right\}\nonumber\\
&&\textstyle\qquad=\mathcal{R}^{\nabla_{\nu-1}}+\mathcal{L}(\Gamma_\nu)+\left\{\sum_{1\le\mu_1\le\nu}\Gamma_{\mu_1}\right\}\star\Gamma_\nu-
\textstyle\frac12\Gamma_\nu\star\Gamma_\nu,\label{eqn-4.a}\\
&&\left\{\rho(\mathcal{L}(\Gamma_\nu))\right\}_{jk}:=
\partial_{x_i}\Gamma_{\nu,jk}{}^i-\partial_{x_j}\Gamma_{\nu,ik}{}^i\,.\nonumber
\end{eqnarray}
It is immediate from the definition that:
\begin{eqnarray*}
\rho(\Gamma_\mu\star\Gamma_\nu)_{jk}&=&\Gamma_{\mu,\ell n}{}^\ell\Gamma_{\nu,jk}{}^n
+\Gamma_{\nu,\ell n}{}^\ell\Gamma_{\mu,jk}{}^n
-\Gamma_{\mu,jn}{}^\ell\Gamma_{\nu,\ell k}{}^n-\Gamma_{\nu,jn}{}^\ell\Gamma_{\mu,\ell k}{}^n\\
&=&\rho(\Gamma_\mu\star\Gamma_\nu)_{kj}.
\end{eqnarray*}
Furthermore, if $\nu\ge2$, then Assertion (3) yields that $\Gamma_{\nu,ik}{}^i=0$ and thus $(\rho(\mathcal{L}(\Gamma_\nu)))$ is symmetric as
well. Consequently
$\rho_a(\mathcal{R}^{\nabla_\nu})=\rho_a(\mathcal{L}(\Gamma_1))=\rho_a(\mathcal{A})$. Thus by Assertion (4), we have
\begin{eqnarray*}
&&|\{\rho(\mathcal{R}^\nabla)(z)-\rho(\mathcal{A})\}_{ij}|\le
\lim_{\nu\rightarrow\infty}||\rho(\mathcal{R}^{\nabla_\nu})-\rho(\mathcal{A})||_{\delta_1,2\nu}\cdot|z|^{2\nu}\\
&\le&\lim_{\nu\rightarrow\infty}||\rho_s(\mathcal{R}^{\nabla_\nu})-\rho_s(\mathcal{A})||_{\delta_1,2\nu}\cdot|z|^{2\nu}=0\,.
\end{eqnarray*}
Thus $\rho(\mathcal{R}^\nabla)=\rho(\mathcal{A})$ as desired and the proof of Theorem \ref{thm-1.4} will be complete once Lemma
\ref{lem-4.2} is established.

Before establishing Lemma \ref{lem-4.2}, we must establish the following solvability result:

\begin{lemma}\label{lem-4.3} If
$\Theta\in\mathcal{H}(\delta,\nu,S^2(V^*))$, there exists $\Gamma\in\mathcal{H}(\delta,\nu+1,S^2(V^*)\otimes V)$ so
$\rho(\mathcal{L}(\Gamma))=\Theta$, so $||\Gamma||_{\delta,\nu+1}\le||\Theta||_{\delta,\nu}$, and so $\Gamma_{ij}{}^j=0$.
\end{lemma}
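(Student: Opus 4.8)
The plan is to use the trace-free requirement to collapse $\rho(\mathcal{L}(\Gamma))$ to a pure divergence, and then to invert that divergence by a radial homotopy operator. First I would observe that if $\Gamma_{ij}{}^j=0$ then, since $\Gamma$ is symmetric in its lower indices, the contraction $\Gamma_{ik}{}^i=\Gamma_{ki}{}^i$ vanishes as well; hence the second term in $\rho(\mathcal{L}(\Gamma))_{jk}=\partial_{z_i}\Gamma_{jk}{}^i-\partial_{z_j}\Gamma_{ik}{}^i$ drops out, and the system to be solved reduces to the (automatically symmetric) divergence equation $\sum_i\partial_{z_i}\Gamma_{jk}{}^i=\Theta_{jk}$ subject to the constraint $\Gamma_{ij}{}^j=0$.

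Next I would introduce the Euler operator $E:=\sum_i z_i\partial_{z_i}$ and the radial integral operator $(Tf)(z):=\int_0^1 t^{m-1}f(tz)\,dt$. Integration by parts in $t$ shows $(m+E)Tf=f$, so $T=(m+E)^{-1}$; moreover $T$ preserves $\mathcal{H}(\delta,\nu)$ with $\|Tf\|_{\delta,\nu}\le (m+\nu)^{-1}\|f\|_{\delta,\nu}$, and multiplication by a coordinate raises the vanishing order by one. Since $\sum_i\partial_{z_i}(z_i\,g)=(m+E)g$, the tensor $z_i\,T\Theta_{jk}$ already solves the divergence equation with considerable room to spare in the norm, the difficulty being only that it is not trace-free.

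To build in the constraint I would use the manifestly trace-free ansatz
$$\Gamma_{jk}{}^i:=z_i\,\Phi_{jk}-\tfrac1{m+1}\bigl(\delta^i_j(\Phi z)_k+\delta^i_k(\Phi z)_j\bigr),\qquad (\Phi z)_k:=\textstyle\sum_l z_l\Phi_{kl},$$
whose trace vanishes identically for every symmetric $\Phi$. Substituting into the divergence equation reduces the problem to the single tensor equation $(m+E)\Phi_{jk}-\frac1{m+1}\bigl(\partial_{z_j}(\Phi z)_k+\partial_{z_k}(\Phi z)_j\bigr)=\Theta_{jk}$, which I would solve triangularly. Writing $\Phi(z,z):=\sum_{k,l}z_kz_l\Phi_{kl}$, contracting twice with $z$ gives the scalar equation $\frac{m-1}{m+1}(m+E)\,\Phi(z,z)=\Theta(z,z)$; contracting once gives the vector equation $\frac{m}{m+1}(m+E)(\Phi z)_j=(\Theta z)_j+\frac1{m+1}\partial_{z_j}\Phi(z,z)$; and the full equation then gives $(m+E)\Phi_{jk}=\Theta_{jk}+\frac1{m+1}\bigl(\partial_{z_j}(\Phi z)_k+\partial_{z_k}(\Phi z)_j\bigr)$. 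Each stage is an inversion of $(m+E)$ by $T$ times a nonzero scalar, nondegenerate because $m\ge3$, applied to $\Theta$ and to derivatives of the lower-order quantities already determined.

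The main obstacle will be the quantitative estimate $\|\Gamma\|_{\delta,\nu+1}\le\|\Theta\|_{\delta,\nu}$ with the sharp constant $1$: the triangular solution involves the derivative terms $\partial(\Phi z)$ and $\partial\Phi(z,z)$, and differentiation lowers the vanishing order while costing a factor that grows linearly in the homogeneous degree. I expect to control these by playing the $(m+\nu)^{-1}$ decay produced by each radial inversion against the degree growth of the derivatives on homogeneous components (where the graded norm is independent of $\delta$), with the factors $\frac{m-1}{m+1}$, $\frac{m}{m+1}$, $\frac1{m+1}$ and the hypothesis $m\ge3$ furnishing exactly the slack needed to bring the overall constant down to $1$.
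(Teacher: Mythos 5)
Your first step --- using $\Gamma_{ik}{}^i=\Gamma_{ki}{}^i=0$ to collapse $\rho(\mathcal{L}(\Gamma))_{jk}$ to the divergence $\partial_{z_i}\Gamma_{jk}{}^i$ --- is exactly the reduction the paper makes, and your trace-free ansatz and the contracted equations for $\Phi(z,z)$ and $(\Phi z)_j$ check out algebraically. But the proof does not close, and the point at which it fails is precisely the one you flag as the ``main obstacle.'' The required inequality $\|\Gamma\|_{\delta,\nu+1}\le\|\Theta\|_{\delta,\nu}$ (and even a weaker bound with a constant independent of $\nu$, which is what Lemma \ref{lem-4.2} actually needs, since there $\nu\to\infty$) cannot be obtained by your scheme as described: your triangular solve feeds the \emph{derivatives} $\partial_{z_j}(\Phi z)_k$ and $\partial_{z_j}\Phi(z,z)$ back into the right-hand side, and differentiation is not a bounded operator from $\mathcal{H}(\delta,\nu)$ to $\mathcal{H}(\delta,\nu-1)$ --- Cauchy estimates only control derivatives on strictly smaller balls. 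The degree-by-degree argument (Kellogg-type linear growth of $\|\partial p\|$ against the $(m+d)^{-1}$ gain from $T$) lives on homogeneous components, and the decomposition of a general $q\in\mathcal{H}(\delta,\nu)$ into homogeneous pieces is far from isometric for the sup norm on $B_\delta$; resumming costs geometric factors that force shrinking $\delta$, which changes the statement. There is also a secondary gap: after solving the three contracted equations in sequence you must verify that the $\Phi_{jk}$ produced at the last stage actually has $z_k\Phi_{jk}$ and $z_jz_k\Phi_{jk}$ equal to the quantities found at the earlier stages (the contraction with $z_k$ does not commute with $(m+E)^{-1}$ but shifts it to $(m+E-1)^{-1}$), without which the uncontracted divergence equation is not recovered.

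The idea you are missing is that no rotation-invariant or equivariant homotopy operator is needed; a deliberately asymmetric, coordinate-by-coordinate construction makes everything trivial. The paper chooses, for each pair $\{i,j\}$, an index $k_{ij}=k_{ji}$ with $k_{ij}\notin\{i,j\}$ (possible exactly because $m\ge3$), and sets $\Gamma_{ij}{}^\ell:=\delta^\ell_{k_{ij}}\int_{k_{ij}}\Theta_{ij}$, where $(\int_k f)(z):=z_k\int_0^1f(z_1,\dots,tz_k,\dots,z_m)\,dt$ is the antiderivative in the single variable $z_k$. Then $\Gamma_{ij}{}^j=0$ holds because the only nonzero upper slot is $\ell=k_{ij}\ne j$; the divergence $\partial_{z_i}\Gamma_{jk}{}^i$ collapses to the single term $\partial_{z_{k_{jk}}}\int_{k_{jk}}\Theta_{jk}=\Theta_{jk}$ by the fundamental theorem of calculus; and the estimate $|(\int_k\Theta)(z)|\le|z_k|\cdot\|\Theta\|_{\delta,\nu}|z|^\nu\le\|\Theta\|_{\delta,\nu}|z|^{\nu+1}$ gives the norm bound with constant $1$ immediately, with no derivatives of the unknown ever appearing.
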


\noindent{\it Proof.} We have assumed throughout that $m\ge3$. For each pair of indices $\{i,j\}$, not necessarily distinct,
choose $k_{ij}=k_{ji}$ distinct from $i$ and from $j$. Define the indefinite integral
$$\textstyle\left(\int_k\Theta\right)(z):=z_k\int_0^1\Theta(z_1,...,z_{k-1},tz_k,z_{k+1},...,z_m)dt\,.$$
Let $\textstyle\Gamma_{ij}{}^\ell:=\delta_{k_{ij}}^\ell\int_{k_{ij}}\Theta_{ij}$.
It is immediate that $||\Gamma||_{\delta,\nu+1}\le||\Theta||_{\delta,\nu}$. Since $k_{ij}$ is distinct from $i$ and $j$, we have
that $\Gamma_{ij}{}^j=0$. Furthermore, $\Gamma(x)$ is real if $x$ is real. Finally, we use Equation (\ref{eqn-4.a}) to complete the proof
by checking:
$$(\rho(\mathcal{L}(\Gamma)))_{jk}=\partial_{x_i}\Gamma_{jk}{}^i=\Theta_{jk}\,.\qquad\qquad\qedbox$$

\begin{proof}[Proof of Lemma \ref{lem-4.2}] Let $\mathcal{A}\in\mathfrak{A}(V)$. The Christoffel symbols $\Gamma_1$ are as in Lemma
\ref{lem-2.1}. Since $\Gamma_1$ is a homogeneous linear polynomial, there is a constant $C_1>0$ so 
$$||\Gamma_1||_{\delta,1}\le C_1\quad\text{and}\quad||\rho(\mathcal{R}^{\nabla_1})-\rho(\mathcal{A})||_{\delta,2}=
||\rho(\Gamma_1\star\Gamma_1)||_{\delta,2}\le C_1^2$$ 
for any $\delta>0$. Let $C:=8m^2C_1>0$.
Choose $\delta>0$ so that
\begin{equation}\label{eqn-4.b}
C\delta^2<1\quad\text{and}\quad \frac{C^3\delta^2}{1-C^2\delta^2}\le C_1\,.
\end{equation}

If $\nu=1$, Assertion (3) of Lemma \ref{lem-4.2} holds vacuously and Assertions (2) and (4) hold since $C>C_1$.
Thus we may proceed by induction to establish Assertions (2), (3), and (4). We assume
$\Gamma_1$,...,$\Gamma_\nu$ have been chosen with the desired properties. Use Lemma \ref{lem-4.3} to choose $\Gamma_{\nu+1}\in
\mathcal{H}(\delta,2\nu+1,S^2(V^*)\otimes V)$ so that 
$$
\rho_s(\mathcal{L}(\Gamma_{\nu+1}))=-\rho_s(\mathcal{R}^{\nabla_{\nu}})+\rho_s(\mathcal{A})\quad\text{and}\quad
||\Gamma_{\nu+1}||_{\delta,2\nu+1}\le C^{2\nu+1}\,.
$$
We have the estimate:
\begin{equation}\label{eqn-4.d}
||\rho_s(\Gamma_\mu\star\Gamma_{\nu+1})||_{\delta,2\mu+2\nu}\le4m^2||\Gamma_\mu||_{\delta,2\mu-1}\cdot||\Gamma_\nu||_{\delta,2\nu+1}\,.
\end{equation}
As $\rho_s(\mathcal{R}^{\nabla_\nu}+\mathcal{L}(\Gamma_{\nu+1}))(z)=\rho_s(\mathcal{A})$ for any $z\in B_\delta$,  Equations
(\ref{eqn-4.a}) and (\ref{eqn-4.d}) yield
\begin{eqnarray*}
&&|\{\rho_s(\mathcal{R}^{\nabla_{\nu+1}})(z)-\rho_s(\mathcal{A})\}_{ij}|\\
&=&\textstyle|\{\rho_s[\mathcal{R}^{\nabla_\nu}+\mathcal{L}(\Gamma_{\nu+1})
+\sum_{\mu\le\nu}\Gamma_\nu\star\Gamma_{\nu+1}
+\frac12\Gamma_{\nu+1}\Gamma_{\nu+1}](z)-\rho_s(\mathcal{A})\}_{ij}|\\
&=&\textstyle|\{\rho_s[\sum_{\mu\le\nu}\Gamma_\nu\star\Gamma_{\nu+1}+\frac12\Gamma_{\nu+1}\Gamma_{\nu+1}](z)]\}_{ij}|\\
&\le&4m^2\{C_1|z|+C^3|z|^3+...+C^{2\nu+1}|z|^{2\nu+1}\}C^{2\nu+1}|z|^{2\nu+1}\\
&\le&4m^2\{C_1+C^3\delta^2+...+C^{2\nu+1}\delta^{2\nu}\}C^{2\nu+1}|z|^{2\nu+2}\,.
\end{eqnarray*}
Estimating using a geometric series and applying Equation (\ref{eqn-4.b}) completes the inductive step by showing
\begin{eqnarray*}
&&|\{\rho_s(\mathcal{R}^{\nabla_{\nu+1}})(z)-\rho_s(\mathcal{A})\}_{ij}|
\le 4m^2\left\{C_1+\frac{C^3\delta^2}{1-C^2\delta^2}\right\}C^{2\nu+1}|z|^{2\nu+2}\\
&&\qquad\le 8m^2C_1C^{2\nu+1}|z|^{2\nu+2}\le C^{2\nu+2}|z|^{2\nu+2}\,.
\end{eqnarray*}
The proof of Lemma \ref{lem-4.2} and thereby of Theorem \ref{thm-4.1} and thus of Theorem \ref{thm-1.4} is now complete.
\end{proof}

\section{The proof of Theorem \ref{thm-1.7}}\label{sect-5}

\begin{proof} 
Let $\{e_i\}$ be a basis for $V$ and let $\{x_i\}$ be the associated coordinate system on $V$. Let $\theta$ be a $1$-form. Motivated by
Theorem \ref{thm-1.6}, we define a connection $\nabla^\theta$ so
$$\nabla^\theta_xy=\theta(x)y+\theta(y)x$$
if $x$ and $y$ are coordinate vector fields. Set 
$\Psi(x,y)=x\theta(y)$ and let $H$ be as in Equation (\ref{eqn-1.d}). Then:
\begin{eqnarray*}
\mathcal{R}^{\nabla^\theta}(x,y)z&=&\theta(x)\theta(y)z+\theta(x)\theta(z)y
-\theta(y)\theta(x)z-\theta(y)\theta(z)x\\
&+&x(\theta(y))z+x(\theta(z))y-y(\theta(x))z-y(\theta(z))x\\
&=&H(\theta\otimes\theta+\Psi)\,.
\end{eqnarray*}
Consequently $\nabla^\theta$ is projectively flat. Let $\Theta\in V^*\otimes V^*$. Set $\theta=x_i\Theta_{ij}dx_j$. Then $\theta(0)=0$ and
$\Psi(0)=\Theta$ so
$$\rho(\mathcal{R}^{\nabla^\theta})(0)=(1-m)\Theta_s-(m+1)\Theta_a\,.$$
Consequently, given any $\mathcal{A}\in\frak{A}(V)$ with
$\mathcal{P}(\mathcal{A})=0$ there exists a torsion free projectively flat connection $\nabla^\theta$ so that
$\mathcal{R}^{\nabla^\theta}_0=\mathcal{A}$. This proves Theorem  \ref{thm-1.7} (1). Furthermore, if $\Theta$ is symmetric, then
$d\theta=0$. Thus $\Psi$ is symmetric for any point
$P\in V$ and $\mathcal{R}^{\nabla^\theta}$ is Ricci symmetric. This proves Theorem \ref{thm-1.7} (2).\end{proof}

\section{The proof of Theorem \ref{thm-1.8}}\label{sect-6}

\begin{proof} Let
$\mathcal{R}^\nabla(x,y;z)=(\nabla_z\mathcal{R}^\nabla)(x,y)$  be the covariant derivative of the curvature; we then have the {\it
second Bianchi identity}:
\begin{equation}\label{eqn-6.a}
0=\mathcal{R}^\nabla(x,y;z)+\mathcal{R}^\nabla(y,z;x)+\mathcal{R}^\nabla(z,x;y)\,.
\end{equation}

Suppose $\rho(\mathcal{R}^\nabla)\in\Lambda^2(V^*)$. Let $\omega_{ij}:=-\frac1{m+1}\rho(\partial_{x_i},\partial_{x_j})$.
By Equations (\ref{eqn-1.d}) and (\ref{eqn-x}),
$$\mathcal{R}^\nabla(\partial_{x_i},\partial_{x_j})\partial_{x_k}=2\omega_{ij}\partial_{x_k}+\omega_{ik}\partial_{x_j}-\omega_{jk}\partial_{x_i}\,.$$
 Covariantly differentiating this relation yields:
\begin{eqnarray*}
&&\mathcal{R}^\nabla(\partial_{x_i},\partial_{x_j};\partial_{x_\ell})\partial_{x_k}
   =2\omega_{ij;\ell}\partial_{x_k}+\omega_{ik;\ell}\partial_{x_j}-\omega_{jk;\ell}\partial_{x_i},\\
&&\mathcal{R}^\nabla(\partial_{x_j},\partial_{x_\ell};\partial_{x_i})\partial_{x_k}
   =2\omega_{j\ell;i}\partial_{x_k}+\omega_{jk;i}\partial_{x_\ell}-\omega_{\ell k;i}\partial_{x_j},\\ 
&&\mathcal{R}^\nabla(\partial_{x_\ell},\partial_{x_i};\partial_{x_j})\partial_{x_k}
   =2\omega_{\ell i;j}\partial_{x_k}+\omega_{\ell k;j}\partial_{x_i}-\omega_{i
k;j}\partial_{x_\ell}\,.
\end{eqnarray*}
Summing and applying the second Bianchi identity of Equation (\ref{eqn-6.a}) yields
\begin{eqnarray}
0&=&(2\omega_{ij;\ell}+2\omega_{j\ell;i}+2\omega_{\ell i;j})\partial_{x_k}\label{eqn-6.b}\\
&+&(\omega_{ik;\ell}-\omega_{\ell k;i})\partial_{x_j}
+(\omega_{\ell k;j}-\omega_{jk;\ell})\partial_{x_i}+(\omega_{jk;i}-\omega_{i k;j})\partial_{x_\ell}\,.\nonumber
\end{eqnarray}
Let $\{i,j,\ell\}$ be distinct indices. Set $k=i$. Examining the coefficient of $\partial_{x_j}$ in Equation (\ref{eqn-6.b}) yields
$$0=\omega_{ii;\ell}-\omega_{\ell i;i}=\omega_{i\ell;i}\,.$$
Polarizing this identity then
yields
$$\omega_{i\ell;j}+\omega_{j\ell;i}=0\quad\text{and}\quad\omega_{\ell i;j}+\omega_{\ell j;i}=0\,.$$
Next we set $k=\ell$ and examine the coefficient of $\partial_{x_k}$ in Equation  (\ref{eqn-6.b}) to see
\begin{eqnarray*}
0&=&2\omega_{ij;k}+2\omega_{jk;i}+2\omega_{ki;j}+\omega_{jk;i}-\omega_{i k;j}\\
&=&2\omega_{ij;k}+3\omega_{jk;i}+3\omega_{ki;j}
=-2\omega_{kj;i}+3\omega_{jk;i}-3\omega_{kj;i}=8\omega_{jk;i}\,.
\end{eqnarray*}
Thus if $\{x,y,z\}$ are linearly independent vectors, then $\nabla_x\omega(y,z)=0$; since the set of all triples of linearly independent
vectors is dense in the set of all triples, this relation holds by continuity for all
$\{x,y,z\}$ and thus
$\nabla\omega=0$. We compute:
\begin{eqnarray*}
0&=&\{(\nabla_x\nabla_y-\nabla_y\nabla_x-\nabla_{[x,y]})\omega\}(z,w)\\
&=&\omega(\mathcal{R}^\nabla(x,y)z,w)+\omega(z,\mathcal{R}^\nabla(x,y)w)\\
&=&4\omega(x,y)\omega(z,w)+2\omega(x,z)\omega(y,w)-2\omega(x,w)\omega(y,z)\,.
\end{eqnarray*}
Set $x=z$ and $y=w$ to see that $6\omega(x,y)^2=0$. Consequently $\omega=0$ so
$\mathcal{R}=0$.
\end{proof}

\section*{Acknowledgments} Research of P. Gilkey partially supported by the
Max Planck Institute in the Mathematical Sciences (Leipzig), by Project MTM2006-01432 (Spain), and by the Complutense Universidad de Madrid
(SEJ2007-67810). Research of S. Nik\v cevi\'c partially supported by Project 144032 (Srbija). Research of D. Westerman
partially supported by the University of Oregon.

\end{document}